\documentclass[a4paper]{amsart}

\usepackage{tikz,xcolor,hyperref}

\definecolor{lime}{HTML}{A6CE39}
\DeclareRobustCommand{\orcidicon}{%
	\begin{tikzpicture}
	\draw[lime, fill=lime] (0,0) 
	circle [radius=0.16] 
	node[white] {{\fontfamily{qag}\selectfont \tiny ID}};
	\draw[white, fill=white] (-0.0625,0.095) 
	circle [radius=0.007];
	\end{tikzpicture}
	\hspace{-2mm}
}

\foreach \x in {A, ..., Z}{%
	\expandafter\xdef\csname orcid\x\endcsname{\noexpand\href{https://orcid.org/\csname orcidauthor\x\endcsname}{\noexpand\orcidicon}}
}

\makeatletter
\@namedef{subjclassname@1991}{$2020$ Mathematics Subject Classification}
\makeatother
\title{Hypervaluations on hyperfields and ordered canonical hypergroups}
\subjclass{Primary: 20N20, 06F99 Secondary: 13A18.}
\keywords{Hypergroup, hyperfield, hyperring, hypervaluation, canonical hypergroup, ordered canonical hypergroup}
\author[A. Linzi \and H. Stoja\l owska]{Alessandro Linzi\orcidA{} \and Hanna Stoja\l owska}
\thanks{The authors would like to thank prof.\ F.-V.\ Kuhlmann and K.\ Kuhlmann for their careful reading and their many extremely useful remarks which helped us to improve this paper significantly.}
\address{Institute of Mathematics\\
University of Szczecin\\ Wielkopolska 15\\ 70-451 Szczecin,
Poland} \email{linzi.alessandro@gmail.com}

\address{Institute of Mathematics\\
University of Szczecin\\ Wielkopolska 15\\ 70-451 Szczecin,
Poland} \email{hanna.stojalowska@phd.usz.edu.pl}
\date{3.9.2020}
\usepackage{geometry}                
\usepackage[applemac]{inputenc}
\usepackage[T1]{fontenc}
\usepackage{indentfirst}
\usepackage{diagmac2}
\usepackage{lipsum}
\usepackage{csquotes}
\usepackage{emptypage}
\usepackage{faktor} 
\usepackage{amsmath,amsfonts,amssymb,amsthm}
\usepackage{latexsym}
\usepackage{mathdots}
\usepackage{mathrsfs}  
\newcommand{\N}{\mathbb{N}}
\newcommand{\Z}{\mathbb{Z}}
\newcommand{\Q}{\mathbb{Q}}
\newcommand{\R}{\mathbb{R}}

\theoremstyle{plain}
\newtheorem{tw}{Theorem}[section]
\newtheorem{pr}[tw]{Proposition}
\newtheorem{lem}[tw]{Lemma}

\theoremstyle{definition}
\newtheorem{df}[tw]{Definition}
\newtheorem{example}[tw]{Example}

\theoremstyle{remark}
\newtheorem{re}[tw]{Remark}
\begin{document}
\maketitle
\begin{abstract}
    We study the concept of hypervaluations on hyperfields. In particular, we show that any hypervaluation from a hyperfield onto an ordered canonical hypergroup is the composition of a hypervaluation onto an ordered abelian group (which induces the same valuation hyperring) and an order preserving homomorphism of hypergroups. 
\end{abstract}

\section{Introduction}
The theory of hyperrings and hyperfields has its origins in the paper of Krasner \cite{Krasner2}. More recently, several authors studied their theory from various points of view. For instance, J.\ Jun, in \cite{Jun}, studies algebraic geometry over hyperrings and gives the definition of hyperideals in a hyperring. M.\ Marshall in \cite{Marshall} studies the definitions of orderings and of positive cones in hyperfields. It was already mentioned in that paper that the two concepts are not equivalent as in the classical theory of those objects. In fact, we show in Example \ref{ex pos cone} that the natural generalization of the construction which in the classical case leads to this equivalence, does not work in the case of hyperfields.\par
In \cite{DeS} B.\ Davvaz and A.\ Salasi deal with hypervaluations on a hyperring onto an ordered abelian group, and also J.\ Lee in \cite{JUN} works with valued hyperfields. These two papers present different approaches in defining a hypervaluation on a hyperfield, which both appear to be interesting and well chosen. \par
In this paper we study another possibility, introduced by Kh.\ Mirdar Harijani and S.\ M.\ Anvariyeh in \emph{A hypervaluation of a hyperfield onto a totally ordered canonical hypergroup}, Studia Scientiarum Mathematicarum Hungarica 52 (1), 87-101 (2015). What they propose is a generalization of the definition of Davvaz and Salasi where the value set is allowed to be an ordered canonical hypergroup (see Definition \ref{OCH}). Here the domain is always assumed to be a hyperfield. We show in our main result, that even though this definition is proper, i.e., nontrivial examples can be found, it is not of much interest since such a hypervaluation can always be decomposed into an order preserving homomorphism of hypergroups and a hypervaluation onto an ordered abelian group (in the sense of Davvaz and Salasi) which, moreover, induces the same valuation hyperring.\par
In the paper of Mirdar Harijani and Anvariyeh, a lot of constructions are proposed without details; it turns out that they are not always possible to carry out. In the present article we also provide counterexamples to justify this last assertion.

\section{Ordered Canonical Hypergroups}

\begin{df}
Let $H$ be a nonempty set and $\mathcal{P}^* (H)$ the family of nonempty subsets of $H$. A \emph{hyperoperation} $*$ is a function which associates with every pair $(x,y) \in H \times H$ an element of $\mathcal{P}^* (H)$, denoted by $x*y$.
\end{df}

A \emph{hypergroupoid} is a nonempty set $H$ with a hyperoperation $*: H \times H \to \mathcal{P}^* (H)$. For $x \in H$, $A,B\subseteq H$ we set 
\[
A*B=\bigcup_{a\in A,b\in B} a*b,
\]
$A * x = A * \lbrace x \rbrace$ and $x *A = \lbrace x \rbrace * A$.

In 1934 the concept of a hypergroup was defined by F.\ Marty in \cite{Marty} to be a nonempty set $H$ with an associative hyperoperation (see Definition \ref{hypergp} below) such that $x * H = H * x = H$ for all $x \in H$. A special class of hypergroups, which will be of interest for us, is the following:

\begin{df} \label{hypergp}
A \emph{canonical hypergroup} is a tuple $(H,*,e)$, where $(H, *)$ is a hypergroupoid and $e$ is an element of $H$ such that the following axioms hold:
\begin{itemize}
\item[(H1)] the hyperoperation $*$ is associative, i.e., $(x*y)*z=x*(y*z)$ for all $x,y,z \in H$,
\item[(H2)] $x*y=y*x$ for all $x,y\in H$,
\item[(H3)] for every $x\in H$ there exists a unique $x'\in H$ such that $e\in x*x'$ (the element $x'$ will be denoted by $x^{-1}$),
\item[(H4)] $z\in x*y$ implies $y\in x^{-1}*z$ for all $x,y,z\in H$.
\end{itemize}
\end{df}

\begin{re}
A canonical hypergroup is a hypergroup in the sense of Marty. Fix $a \in H$ and take $x \in H*a$. Then there exist $h \in H$ such that $x \in h*a \subseteq H$, showing that $H*a \subseteq H$. For the other inclusion, take $x \in H$, then 
\[x \in x*e \subseteq x*(a^{-1}*a) = (x*a^{-1}) * a,
\]
so there exist $h \in x*a^{-1} \subseteq H$ such that $x \in h*a \subseteq H*a$.
\end{re}

\begin{re}
In \cite{Mittas} the definition of a canonical hypergroup also requires explicitly that $x*e = \lbrace x \rbrace$ for all $x \in H$. However, we note that this axiom follows from (H3) and (H4). Indeed, suppose that $y \in x*e$ for some $x,y \in H$. Then $e \in x^{-1} * y$ by (H4). Now $y = x$ follows from the uniqueness required in (H3).
\end{re}

\begin{re} \label{grhy}
Note that an abelian group $G$ is not a priori a hypergroup, because the operation on $G$ is not a hyperoperation, as it takes values in $G$ and not in $\mathcal P^* (G)$. But it can be turned into a hypergroup by setting $a * b := \lbrace ab \rbrace$. In other words, we can turn an abelian group into a hypergroup by identifying each element $a$ of $G$ with the singleton $\lbrace a \rbrace$.
\end{re}
\vspace{0,2cm}

\begin{example} \label{sign hgp}
Consider the set $H: =\lbrace -1, 0 , 1 \rbrace$ with a hyperoperation $*$ defined as follows:
\begin{align*}
(-1) * (-1) &=(-1)*0= 0*(-1)=\{-1\}\\
0 * 0 &= \{0\}\\
1 * 1 &=1*0=0*1= \{1\}\\
1 * (-1) &= (-1)*1 =\lbrace -1, 0 , 1\rbrace. \end{align*}
Then $(H, *, 0)$ is a canonical hypergroup, called the \emph{sign hypergroup}. As the reader may check, we have that $1^{-1} = -1$, $(-1)^{-1} = 1$, $0^{-1} = 0$.
\end{example}

The next example can be found in \cite{Krasner}.

\begin{example}\label{quotient}
Let $R$ be a ring and $G$ a normal subgroup of its multiplicative semigroup. Consider the following equivalence relation $\sim$ on $R$:
$a \sim b$ if and only if there exist $g,h \in G$ s.t. $ag = bh$. The equivalence class of $a \in R$ is 
\[ aG:= \lbrace ag \mid g \in G \rbrace.\] 
It is possible to define a hyperoperation on $R/G$ in the following way:
\[
aG + bG := \lbrace (ag+bh)G \mid g,h \in G \rbrace.
\]
Then $(R/G,+,\lbrace 0_R \rbrace)$ is a canonical hypergroup. Indeed, the associative law follows from the same law in $R$, as well as commutativity. 
The unique inverse of $aG$ is $(-a)G$. Indeed, 
\[
aG + (-a)G = \lbrace (ag-ah)G \mid g,h \in G \rbrace \ni (a-a)G = 0_R G,
\]
moreover, if
\[
0_R G \in aG + bG = \lbrace (ag+bh)G \mid g,h \in G \rbrace,
\]
then there exist $g, h \in G$ such that $0_R = ag+bh$. Multiplying by $g^{-1}$, we obtain that $-a = bhg^{-1}$ and so $(-a)G = bG$.

Assume now that $cG \in aG+bG$. We wish to show that $bG \in (-a)G + cG$. We have 
\[ 
cG \in \lbrace (ag+bh)G \mid g,h \in G \rbrace,
\]
so there exist $g,h \in G$ such that $c = ag+bh$. Multiplying by $h^{-1}$, we obtain $b = -agh^{-1} + ch^{-1}$, so $bG \in \lbrace ((-a)g' + ch')G \mid g', h' \in G \rbrace = (-a)G+cG$.
\end{example}

\begin{re} \label{re}
We note that the sign hypergroup can be obtained as a quotient in the way described in the previous example. Take $R=\R$ and $G=\dot \R^2$, where $\dot\R^2$ denotes the set of non-zero squares in $\R$. The result follows from the fact that every non-zero real number is either a square or the opposite of a square. 
\end{re}

In their paper Mirdar Harijani and Anvariyeh consider the set $\mathbb Z / \mathbb N = \lbrace a \mathbb N \mid a \in \mathbb Z \rbrace$ with the hyperoperation defined as follows:
\[a \mathbb N * b\mathbb N = \lbrace c \mathbb N \mid c \in a \mathbb N + b\mathbb N \rbrace.\]
This construction does not give a canonical hypergroup in the sense of Definition \ref{hypergp}, as is shown in the next example. The reason is that $\mathbb N$ is not a multiplicative subgroup of the multiplicative semigroup of $\mathbb Z$.

\begin{example}
Consider the tuple $(\Z / \N, *, 0 \N)$. We observe that $0 \N \in  1 \mathbb N * -k \mathbb N$ for every $k \in \N$. Hence axiom (H3) is not fulfilled.
\end{example}

\begin{df}\label{OCH}
An \emph{ordered canonical hypergroup} is a tuple $(H, *, e, \leq )$, where $(H, *, e)$ is a canonical hypergroup and $\leq$ is a partial order such that 
\begin{equation} \label{ord}
    a \leq b \Longrightarrow a*c \nearrow b*c
\end{equation} 
for all $a,b,c, \in H$; here, if $A, B \subseteq H$, then $A \nearrow B$ means that for all $b \in B$ there exists $a \in A$ such that $a \leq b$. If for each $a, b \in H$ either $a \leq b$ or $b \leq a$, then we call $\leq$ an \emph{ordering} or a \emph{linear order relation} on $H$.
\end{df}

\begin{re}
One should not use \lq\lq$\leq$\rq\rq\ on the subsets as the relation will not be antisymmetric. For example if $H$ contains a smallest element $b$, then any two subsets $A,B$, which both contain $b$ will satisfy $A \nearrow B$ and $B \nearrow A$ without necessarily being equal.
\end{re}

\begin{re}
It should be noted that some authors in defining $A \nearrow B$ require that for all $a \in A$ there exists $b \in B$ and for all $b \in B$ there exists $a \in A$ such that $a \leq b$. Others instead require only that for all $a \in A$ there exists $b \in B$ such that $a \leq b$. The relations between all these definitions still have to be investigated. However, in what follows we will just use the concept introduced in Definition \ref{OCH}. 
\end{re}

\begin{lem} \label{FVK}
Let $(H, *, e, \leq)$ be an ordered canonical hypergroup. Take $a,b,x,y \in H$ and $B \subseteq H$.
\begin{enumerate}
    \item[1)] If $\lbrace a \rbrace \nearrow B$, then $a \leq b$ for all $b \in B$.
    \item[2)] If $x > e$, then $x^{-1} < e$.
    \item[3)] If $x \geq e$ and $y \geq e$, then $b \geq e$ for all $b \in x*y$.
    \item[4)] If $x > e$ and $y \geq e$, then $b > e$ for all $b \in x*y$.
\end{enumerate}
\end{lem}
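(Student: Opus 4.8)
The plan is to establish the four statements in sequence, using the earlier ones to bootstrap the later ones and reducing everything to the singleton case in part 1) together with the identity $x*e = \{x\}$ noted after Definition \ref{hypergp}. Part 1) should be immediate: since $\{a\}$ contains only $a$, the hypothesis $\{a\} \nearrow B$ says precisely that for every $b \in B$ there is an element of $\{a\}$, necessarily $a$ itself, below $b$; that is, $a \leq b$.

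For part 2), I would start from $e \leq x$ (which holds because $x > e$) and apply the order axiom \eqref{ord} with $c = x^{-1}$, obtaining $e * x^{-1} \nearrow x * x^{-1}$. Using commutativity together with $x^{-1} * e = \{x^{-1}\}$, the left side collapses to $\{x^{-1}\}$, so $\{x^{-1}\} \nearrow x * x^{-1}$. Since $e \in x * x^{-1}$ by (H3), part 1) yields $x^{-1} \leq e$. To upgrade this to a strict inequality I would rule out $x^{-1} = e$: if it held, then $e \in x * x^{-1} = x * e = \{x\}$ would force $x = e$, contradicting $x > e$.

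The same device handles part 3): from $e \leq y$ and \eqref{ord} with $c = x$ I get $e * x \nearrow y * x$, which reads $\{x\} \nearrow x * y$ after using $e*x = \{x\}$ and commutativity. Part 1) then gives $x \leq b$ for every $b \in x*y$, and since $e \leq x$, transitivity delivers $e \leq b$. Part 4) follows by combining parts 2) and 3): part 3) already gives $b \geq e$ for all $b \in x*y$, so it remains only to exclude $b = e$. If $e \in x*y$, then (H4) with $z = e$ gives $y \in x^{-1} * e = \{x^{-1}\}$, whence $y = x^{-1}$; but $x > e$ and part 2) force $x^{-1} < e$, contradicting $y \geq e$.

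I expect no serious obstacle here; the only points requiring care are the repeated, and easily overlooked, use of $x*e = \{x\}$ to collapse the left-hand hyperproducts to singletons so that part 1) applies, and the correct choice of the cancelling element $c$ in \eqref{ord} at each step. The strictness arguments in parts 2) and 4) are the only places where the hypergroup axioms (H3) and (H4), rather than the order alone, genuinely enter, and these are where I would be most attentive to avoid implicitly assuming the conclusion.
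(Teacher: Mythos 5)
Your proof is correct and follows essentially the same route as the paper's: reduce each hyperproduct to a singleton via the order axiom, apply part 1), and handle strictness by contradiction through (H3)/(H4). The only cosmetic differences are that in part 3) you collapse to $\{x\}$ where the paper collapses to $\{y\}$ (a symmetric choice), and in part 4) you invoke (H4) where the paper appeals directly to the uniqueness in (H3); both are equally valid.
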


\begin{proof}
1): This follows from the definition of ``$\nearrow$'' and the fact that $\{a\}$ contains only $a$.
\\

2): By condition (\ref{ord}), $\{x^{-1}\}=0*x^{-1}\nearrow x*x^{-1}$, so $x^{-1}\leq b$ for every $b\in x*x^{-1}$
by part 1). As $e\in x*x^{-1}$, we obtain that 
$x^{-1}\leq e$. Now $x^{-1}=e$ is impossible because otherwise, $e\in x*x^{-1}=\{x\}$, which implies that $x=e$ 
in contradiction to our assumption that $x>e$.
\\ 

3): If $x\geq e$ and $y\geq e$, then $\{y\}=e*y\nearrow x*y$, so $e\leq y\leq b$ for every $b\in x*y$
by part 1).
\\

4): If $x> e$ and $y\geq e$, then $b\geq e$ for all $b\in x*y$ by part 3). However, $e\notin x*y$ since otherwise,
$y=x^{-1}$ which by part 2) yields that $y<e$, in contradiction to our assumption that $y\geq e$. Hence $b\ne e$ and consequently, $b>e$.
\end{proof}

In \cite{Marshall} the definition of a positive cone is given for hyperfields. Let us now define this concept for canonical hypergroups.
\begin{df}
A subset $P$ of a canonical hypergroup $(H, *, e)$ is called a \emph{positive cone} if the following axioms hold:
\begin{itemize}
\item[(P1)] $P \cap -P = \lbrace e \rbrace$,
\item[(P2)] $P * P \subseteq P$,
\item[(P3)] $P\cup -P=H$.
\end{itemize}
\end{df}

In the theory of ordered abelian groups, the existence of a positive cone is equivalent to the existence of an ordering and there is a one to one correspondence between them. It was already mentioned in \cite{Marshall} that this, in general, is no more true in the case of hyperfields, and the argument holds for hypergroups as well. Mirdar Harijani and Anvariyeh claim that one can always construct an ordering from a positive cone by setting:
\begin{equation}\label{2}
x \leq y \iff (y * x^{-1}) \cap P \neq \emptyset.
\end{equation}
In the following example we show that this construction is not always possible.

\begin{example} \label{ex pos cone}
Take $H:= \mathbb Q / \dot{\mathbb Q}^{2}$, where $\dot{\mathbb Q}^{2}$ denotes the set of nonzero squares in $\Q$. We see that $H$ is a canonical hypergroup with the hyperoperation defined as:
\[
a \dot {\mathbb Q}^{2} * b \dot{\mathbb Q}^{2} = \lbrace c\dot{\mathbb Q}^{2} \mid c \in a\dot{\mathbb Q}^{2} + b\dot{\mathbb Q}^{2} \rbrace
\] 
(see also Example \ref{quotient}). Observe that the set $P = {\mathbb Q}^{+} / \dot{\mathbb Q}^{2} \cup \lbrace 0 \rbrace$ fulfils the conditions of the definition of a positive cone. However, the relation defined in (\ref{2}) is not an ordering on $H$. Indeed, observe that $ 5 \dot{\mathbb Q}^{2} \in (2 \dot{\mathbb Q}^{2} * (3 \dot{\mathbb Q}^{2})^{-1}) \cap P$, so, in particular, $(2 \dot{\mathbb Q}^{*2} * (3 \dot{\mathbb Q}^{2})^{-1}) \cap P \neq \emptyset$, which means that $3 \dot{\mathbb Q}^{2} \leq 2 \dot{\mathbb Q}^{2}$. On the other hand, $1 \dot{\mathbb Q}^{2} \in (3 \dot{\mathbb Q}^{2} * (2 \dot{\mathbb Q}^{2})^{-1}) \cap P$, so $(3 \dot{\mathbb Q}^{2} * (2 \dot{\mathbb Q}^{2})^{-1}) \cap P \neq \emptyset$, which means that $2 \dot{\mathbb Q}^{2} \leq 3 \dot{\mathbb Q}^{2}$. Clearly, $2 \dot{\mathbb Q}^{2} \neq 3 \dot{\mathbb Q}^{2}$, so the relation $\leq$ is not antisymmetric.
\end{example}

However, there exist hypergroups in which positive cone and orderings behave as in the classical case. The simplest example is the sign hypergroup:

\begin{example} 
Consider the sign hypergroup $H = \lbrace -1, 0 ,1 \rbrace$ with positive cone $P = \lbrace 0, 1 \rbrace$. We define an order relation $\leq$ on $H$ as follows: $-1 \leq 0 \leq 1$. We leave it to the reader to show that $\leq$ is a linear order on $H$ as in Definition \ref{OCH}. This ordering corresponds to $P$ in the way described in (\ref{2}). Indeed, $(1 * 0^{-1}) \cap P = \lbrace 1 \rbrace$, $(0 * (-1)^{-1}) \cap P = \lbrace 1 \rbrace$, $(1 * (-1)^{-1}) \cap P = \lbrace 0, 1 \rbrace$, so in every case we obtain a nonempty intersection.
\end{example}

\section{Hyperrings and hyperfields}

\begin{df}
A \emph{hyperring} is a tuple $(R,+,\cdot,0,1)$ which satisfies the following axioms:
\begin{itemize}
\item[(R1)] $(R,+,0)$ is a canonical hypergroup,
\item[(R2)] $(R,\cdot,1)$ is a commutaive monoid such that $x\cdot0=0$ for all $x\in R$,
\item[(R3)] the operation $\cdot$ is distributive with respect to the hyperoperation $+$. That is, for all $x,y,z\in R$,
\[
x(y+z)=xy+xz
\]
as sets. Here for $x\in H$ and $A\subseteq H$ we have set
\[
xA:=\{xa\mid a\in A\}.
\]
\end{itemize}
If $(R\setminus\{0\}, \cdot,1)$ is an abelian group, then $(R,+,\cdot,0,1)$ is called \emph{hyperfield}.
\end{df}

The following example is the original example of a quotient hyperring (see \cite{Krasner}).

\begin{example} \label{quot}
Let $R$ be a commutative ring with $1$, $G$ a normal subgroup of its multiplicative semigroup and recall the notations introduced in Example \ref{quotient}. 
One may define multiplication in $R/G$ as $aG \cdot bG:= (ab)G$. Then $(R/G, +, \cdot, \lbrace 0 \rbrace, 1_RG)$ is a hyperring and if $R$ is a field, then it is a hyperfield. We have to show that (R3) holds. Take $a,b,c \in R$ and suppose that $xG \in aG \cdot (bG + cG)$. Then there exist $g,h \in G$ such that $x = a(bg+ch) = abg + ach$, where we have used the distributivity law in $R$. We obtain 
\[
xG \in \lbrace (abg + ach)G \mid g,h \in G \rbrace = abG + acG = aG bG + aG cG.
\]
This shows that $aG \cdot (bG+cG) \subseteq aG bG + aG cG$. To show the converse inclusion we note that
\[
xG \in aG bG + aG cG = (ab)G + (ac)G.
\]
Hence there exist $g,h \in G$ such that $x = (ab)g + (ac)h = a(bg+ch)$, where we used the distributivity law in $R$. We obtain 
\[
xG \in \lbrace a(bg+ch)G \mid g,h \in G \rbrace = \lbrace aG(bg + ch)G \mid g,h \in G \rbrace = aG \cdot (bG + cG).
\]
If $R$ is a field, then for every $a \in R$ we have that $a^{-1}G = (aG)^{-1}$. Indeed, by definition $aG \cdot a^{-1}G = (a \cdot a^{-1})G = 1_RG$.
\end{example}

\begin{example}
As we have already noted in Remark \ref{re} that the sign hypergroup $H$ can be seen as a quotient $\R / \dot\R^2$. Since $\R$ is a field, we obtain that $(H, +, \cdot, 0, 1)$ is a hyperfield, where we now denote $*$ by $+$ and $\cdot$ behaves as follows: 
\begin{align*}
    -1 \cdot 1 &= 1 \cdot -1 = -1,\\
    0 \cdot 0 &= 0 \cdot 1 = 1 \cdot 0 = 0 \cdot -1 = -1 \cdot 0 = 0, \\
    1 \cdot 1 &= -1 \cdot -1 = 1.
\end{align*}
\end{example}
\vspace{0,2cm}
In their paper Mirdar Harijani and Anvariyeh, in an attempt to generalize the construction of formal power series, argue that the set $X$ of order preserving mappings $f:H\to K$ such that the support of $f$ is finite, where $H$ is a ordered canonical hypergroup and $K$ an ordered hyperfield, is a hyperdomain, i.e., a hyperring without zero divisors, with the operations defined as follows:
\[
(f+g)(x):=f(x)+_K g(x),
\]
and
\begin{equation}\label{1}
(f g)(x):=\sum_{x\in x_1* x_2}f(x_1)g(x_2).
\end{equation}
First of all we note that $X$ is not even a hyperring since if $f$ is non-constant and order preserving, then $-f$ is not order preserving. In addition, if $K$ is a hyperfield which is not a field, then the multiplication $(fg)(x)$ defined in (\ref{1}) is a subset of $K$ and not an element of $K$ as the definition of hyperring would require. 
Hence we do not obtain a hyperring. Finally, even if we do not restrict to order preserving mappings and assume $K$ to be a field, we show, in the next example, that the multiplication defined in (\ref{1}) is not associative.

\begin{example}
Let $H=\{-1,0,1\}$ denote the sign hypergroup and $K$ an ordered field. We define the maps $f, g, h:H\to K$ as follows $f(-1)=f(0) = f(1)=1_K$, $g(-1) = g(0)=g(1)=-1_K$, and $h(1)=1_K$, $h(0)=-1_K$, $h(-1)=0_K$. By direct computations we obtain
\begin{align*}
((fg)h)(1) &= 5 \cdot (-1_K) + 5 \cdot 1_K + 3 \cdot (-1_K) + 5 \cdot (-1_K) = 8 \cdot (-1_K) 
\end{align*}
and
\begin{align*}
(f(gh))(1) &= 2 \cdot (-1_K) + 2 \cdot (-1_K) + 2 \cdot (-1_K) = 6 \cdot (-1_K)
\end{align*}
where we used the fact that $1\in 1* 1,1* 0,0* 1,1*-1,-1* 1$, that $-1\in -1* -1,-1* 0,0* -1,1* -1,-1 * 1$ and that $0\in0* 0,1* -1,-1* 1$. Here $*$ denotes the operation of $H$. We conclude that $f(gh)\neq (fg)h$, hence the associativity law does not hold.
\end{example}
\section{Hypervaluations}
As it was mentioned before, there are several approaches to the definition of a hypervaluation on a hyperfield. We now wish to investigate the following.

\begin{df}\label{hyperval}
Let $(F,+,\cdot,0,1)$ be a hyperfield and $(H,*,e,\leq)$ be an ordered canonical hypergroup. A surjective map $w: F \to H \cup \lbrace \infty \rbrace$   is called a \emph{hypervaluation} on the hyperfield $F$ if the following properties are satisfied:
\begin{itemize}
    \item [(V1)] $w(x) = \infty \iff x = 0$,
    \item [(V2)] $w(-x) = w(x)$,
    \item [(V3)] $w( x \cdot y) \in w(x) * w(y)$,
    \item [(V4)] $z \in x+y \implies w(z) \geq \min \lbrace w(x), w(y) \rbrace$.
    \end{itemize}
\end{df}

The following is a nontrivial example of a hypervaluation onto an ordered canonical hypergroup. 

\begin{example} \label{ex hypval}
Let $K$ be a field and $\Gamma$ an ordered abelian group. Assume that a classical valuation 
\[
v:K\to\Gamma\cup\{\infty\}
\]
is given. We now define a hypervaluation $w$ from $K$ onto the sign hypergroup $\{-1,0,1\}$. 
\[
w(x)=\begin{cases}1&\text{if }\infty\neq v(x)>0_\Gamma\\
0&\text{if }v(x)=0_\Gamma\\
-1&\text{if }v(x)<0_\Gamma\\
\infty&\text{otherwise}
\end{cases}
\]
Let us show that $w$ is a hypervaluation on the field $K$, as in Definition 4.1. Clearly, $w(x)=\infty$ if and only if $x=0_K$ and $w(x)=w(-x)$ because that is true for $v$. In order to show that $w(xy)\in w(x) * w(y)$, where $*$ denotes the operation in the sign hypergroup, we observe that if $v(x)$ and $v(y)$ have the same sign there is nothing to show, since $v(xy)=v(x)+v(y)$ will have the same sign as $v(x)$ and $v(y)$. 
If $v(x)=0$, then $w(x)=0$ and $v(xy)=v(y)$ and $w(xy) = w(y) = 0 * w(y)$; the same holds if $v(y) = 0$. If $x=0$ or $y=0$, then the situation is clear.
If, say, $v(x)<0_\Gamma$ and $v(y)>0_\Gamma$, then $w(xy)\in \{-1,0,1\} = -1 * 1 = w(x) * w(y)$, and similarly in the case where $v(x)>0_\Gamma$ and $v(y)<0_\Gamma$. This shows that the third axiom of a hypervaluation holds for $w$. The fourth and last axiom states:
\[
z\in x+y \Longrightarrow w(z)\succeq\min\{w(x),w(y)\}
\]
where $\preceq$ denotes the ordering in the sign hypergroup.
Since $K$ is a field we have to check that $w(x+y)\succeq\min\{w(x),w(y)\}$. 
If $w(x) \neq w(y)$, then clearly $v(x) \neq v(y)$, so we have $v(x+y) = \min \lbrace v(x), v(y) \rbrace$. Hence $w(x+y) = \min \lbrace w(x), w(y) \rbrace$. If $x=y=0$ there is nothing to show. If $w(x)=w(y)=0$, then $v(x)=v(y)=0_\Gamma$, so $v(x+y) \geq 0_\Gamma$. Then $w(x+y) \in \lbrace 0,1, \infty \rbrace$ and $w(x+y)\succeq 0 = \min\{w(x),w(y)\}$. If $w(x)=w(y)=1$, then $v(x), v(y) > 0_\Gamma$, so $v(x+y) > 0_\Gamma$. We obtain $w(x+y) \in \lbrace 1, \infty \rbrace$ and $w(x+y)\succeq 1 = \min\{w(x),w(y)\}$. Finally, if $w(x)=w(y)=-1$, then $w(x+y) \in \lbrace -1, 0, 1, \infty \rbrace$, but $\min \lbrace w(x), w(y) \rbrace = -1$, so the fourth axiom also holds for $w$.
\end{example}

\begin{lem} \label{valpro}
Let $w: F \to H \cup \lbrace \infty \rbrace$ be a hypervaluation. Then $w(1_F) = e$ and $w(x^{-1}) = (w(x))^{-1}$.
\end{lem}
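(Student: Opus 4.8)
The plan is to prove the two assertions separately, using axiom (V3) together with the multiplicative structure of $F$ and the uniqueness of inverses in the canonical hypergroup $H$. Throughout I would rely on the identity $h * e = \{h\}$, valid in every canonical hypergroup, as established in the earlier remark (the one recalling Mittas' redundant axiom).

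For the first claim I would exploit the surjectivity of $w$. Since $e \in H$, there is some $x_0 \in F$ with $w(x_0) = e$, and $x_0 \neq 0$ by (V1) because $e \neq \infty$. As $1_F$ is the multiplicative identity of $F$, we have $1_F \cdot x_0 = x_0$, so applying (V3) yields
\[
e = w(x_0) = w(1_F \cdot x_0) \in w(1_F) * w(x_0) = w(1_F) * e = \{w(1_F)\},
\]
where the final equality uses $h * e = \{h\}$. Hence $w(1_F) = e$.

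For the second claim, let $x \in F$ with $x \neq 0$, so that $x^{-1}$ exists in $F$ and, by (V1), both $w(x)$ and $w(x^{-1})$ lie in $H$. From $x \cdot x^{-1} = 1_F$ together with (V3) and the first claim I would obtain
\[
e = w(1_F) = w(x \cdot x^{-1}) \in w(x) * w(x^{-1}).
\]
By axiom (H3), $(w(x))^{-1}$ is the \emph{unique} element $y \in H$ with $e \in w(x) * y$; since $w(x^{-1})$ is such an element, uniqueness forces $w(x^{-1}) = (w(x))^{-1}$.

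The only genuinely delicate point is the first claim. One is tempted to start from $1_F \cdot 1_F = 1_F$, which through (V3) gives merely $w(1_F) \in w(1_F) * w(1_F)$; this does not pin down $w(1_F) = e$, since for instance in the sign hypergroup $1 \in 1 * 1$ while $1 \neq e$. The essential trick is therefore to feed an element of valuation exactly $e$, available by surjectivity, into (V3) and then invoke $h * e = \{h\}$. Everything else is a direct application of the hypervaluation axioms and of the uniqueness of hypergroup inverses.
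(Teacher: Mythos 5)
Your proof is correct and follows essentially the same route as the paper: both arguments pick (by surjectivity) an element of valuation $e$, feed it into (V3) together with $h * e = \{h\}$ to get $w(1_F) = e$, and then apply (V3) to $x \cdot x^{-1} = 1_F$ and the uniqueness in (H3) for the second claim. Your closing remark explaining why $1_F \cdot 1_F = 1_F$ alone is insufficient is a nice observation, but the substance of the argument is identical to the paper's.
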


\begin{proof}
Let $x \in F$ be such that $w(x) = e$. Then $e = w(x \cdot 1_F) \in w(x) * w(1_F) = \lbrace w(1_F) \rbrace$, hence $e = w(1_F)$. To prove the second assertion observe that, since $xx^{-1} = 1_F$, we have that $w(xx^{-1}) = w(1_F) = e$. By axiom (V3) we obtain that $e=w(xx^{-1}) \in w(x) * w(x^{-1})$, then axiom (H3) implies that $w(x^{-1})=(w(x))^{-1}$.
\end{proof}

\begin{df}
Let $(R, +, \cdot, 0,1)$ be a hyperring. An element $x \in R$ is called a \emph{unit of $R$} if there exists $y \in R$ such that $x \cdot y = 1$.
\end{df}

\begin{df}
Let $F$ be a hyperfield and $R \subseteq F$ a hyperring with respect to the hyperaddition and multiplication of $F$. If for every $x \in F$ either $x \in R$ or $x^{-1} \in R$, then $R$ is called a \emph{valuation hyperring}.
\end{df}

\begin{df}
Let $R$ be a hyperring. 
\begin{enumerate}
\item A nonempty subset $I \subseteq R$ is a \emph{hyperideal} if for all $a,b \in I$ and for all $r \in R$ we have $a+b \subseteq I$, $-a \in I$ and $ar \in I$.
\item A hyperideal $I \subsetneq R$ is \emph{maximal} if $I$ satisfies the following property: if $J \subseteq R$ is a hyperideal of $R$ such that $I\subsetneq J$, then $J = R$.
\end{enumerate}
\end{df}

In the paper of Mirdar Harijani and Anvariyeh one can find the first three statements of the following proposition. For the sake of completeness we rewrite the proof and complete it with details where needed. For our purposes, we also add a fourth statement.

\begin{pr} \label{prop}
Let $(F, +, \cdot, 0, 1)$ be a hyperfield, $(H, *,e,\leq)$ an ordered canonical hypergroup and $w: F \rightarrow H\cup\{\infty\}$ a hypervaluation. 
\begin{enumerate}
\item[1)] The set $\mathcal{O}_w = \lbrace x \in F \mid w(x) \geq e \rbrace$ is a valuation hyperring.
\item[2)] The set $U_w = \lbrace x \in F \mid w(x) = e \rbrace$ is a group under the multiplication of the hyperfield $F$ and consists of all units of $\mathcal O_w$.
\item[3)] The set $\mathfrak{m}_w = \lbrace x \in F \mid w(x) > e \rbrace$ is the unique maximal hyperideal of $\mathcal O_w$.
\item[4)] The quotient $G:= (F\setminus\{0\}) / U_w$ is an ordered abelian group with respect to the operation $xU_w \cdot yU_w = xyU_w$ and the ordering $xU_w \leq yU_w \Leftrightarrow yx^{-1} \in \mathcal O_w$.
\end{enumerate}
\end{pr}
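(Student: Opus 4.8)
\emph{The plan} is to treat the four statements in order, using throughout that multiplication in the hyperfield $F$ is a genuine single-valued operation (only addition is multivalued), and leaning on Lemma \ref{valpro} together with the four parts of Lemma \ref{FVK}. For 1) I would first check that $\mathcal{O}_w$ is closed under the hyperfield operations, so that it inherits the hyperring axioms (R1)--(R3) from $F$. Closure under addition is immediate from (V4): if $w(x),w(y)\ge e$ and $z\in x+y$, then $w(z)\ge\min\{w(x),w(y)\}\ge e$. Closure under multiplication is Lemma \ref{FVK} 3): by (V3), $w(xy)\in w(x)*w(y)$, and every element of $w(x)*w(y)$ is $\ge e$ when $w(x),w(y)\ge e$. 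Since $w(0)=\infty\ge e$, $w(1)=e$ by Lemma \ref{valpro}, and $w(-x)=w(x)$ by (V2), the element $0$, the unit $1$ and additive inverses all lie in $\mathcal{O}_w$, so $\mathcal{O}_w$ is a sub-hyperring. The substantive point is the valuation property: for $x\ne 0$ one must show $x\in\mathcal{O}_w$ or $x^{-1}\in\mathcal{O}_w$, that is, that $a:=w(x)$ or $a^{-1}=w(x^{-1})$ (Lemma \ref{valpro}) satisfies $\ge e$.

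For 2), note $U_w\subseteq F\setminus\{0\}$ since $e\ne\infty$. Multiplicativity is clear: if $w(x)=w(y)=e$ then $w(xy)\in e*e=\{e\}$, and $w(x^{-1})=e^{-1}=e$ by Lemma \ref{valpro}, so $U_w$ is a subgroup of the multiplicative group of $F$. That $U_w$ is exactly the unit group of $\mathcal{O}_w$ follows from Lemma \ref{FVK} 2): $x$ is a unit of $\mathcal{O}_w$ iff $w(x)\ge e$ and $w(x^{-1})=w(x)^{-1}\ge e$, and if $w(x)>e$ then $w(x)^{-1}<e$, a contradiction, forcing $w(x)=e$. For 3), I would verify that $\mathfrak{m}_w$ is a hyperideal: for $a,b\in\mathfrak{m}_w$ and $z\in a+b$, (V4) gives $w(z)\ge\min\{w(a),w(b)\}$, which exceeds $e$ since both $w(a),w(b)>e$; also $w(-a)=w(a)>e$; and for $r\in\mathcal{O}_w$, Lemma \ref{FVK} 4) applied to $w(a)>e$ and $w(r)\ge e$ yields $w(ra)>e$. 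It is proper because $w(1)=e$. The key identity is $\mathfrak{m}_w=\mathcal{O}_w\setminus U_w$, immediate from 2) and the definitions, so $\mathfrak{m}_w$ is precisely the set of non-units; a standard local-ring argument then finishes, since any proper hyperideal $I$ can contain no unit (a unit in $I$ would force $1\in I$ and hence $I=\mathcal{O}_w$), so $I\subseteq\mathfrak{m}_w$.

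For 4), since $U_w$ is a subgroup of the abelian group $F\setminus\{0\}$, the quotient $G$ is an abelian group under the stated operation. I would check that $xU_w\le yU_w\Leftrightarrow yx^{-1}\in\mathcal{O}_w$ is well defined by replacing $x,y$ with $xu,yv$ for $u,v\in U_w$: then $(yv)(xu)^{-1}=yx^{-1}\cdot vu^{-1}$ by commutativity, and since $vu^{-1}$ is a unit of $\mathcal{O}_w$ by 2), membership of $yx^{-1}$ in $\mathcal{O}_w$ is unchanged. Reflexivity is $1\in\mathcal{O}_w$; transitivity and compatibility with the group operation follow from closure of $\mathcal{O}_w$ under multiplication, using $zx^{-1}=(zy^{-1})(yx^{-1})$; antisymmetry uses that $yx^{-1},xy^{-1}\in\mathcal{O}_w$ makes $yx^{-1}$ a unit, i.e.\ $yx^{-1}\in U_w$, so $xU_w=yU_w$. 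Totality of the order is exactly the valuation-hyperring property of 1) applied to $t=yx^{-1}$.

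\emph{The main obstacle} I anticipate is the comparability step inside 1): showing that for $a\in H$ one always has $a\ge e$ or $a^{-1}\ge e$. The classical valuation-ring argument (in which $v(x)+v(x^{-1})=0$ in a linearly ordered group) is unavailable here, because $w$ is not a homomorphism and $\le$ is only assumed to be a partial order; (V3) yields merely $e\in a*a^{-1}$. I would therefore try to exploit $e\in a*a^{-1}$ together with the order axiom (\ref{ord}) and Lemma \ref{FVK} to force comparability of $a$ and $a^{-1}$ with $e$. This is the crux of the whole proposition: it is precisely what makes $\mathcal{O}_w$ a valuation hyperring in 1) and, via that, what upgrades the partial order to a genuine linear order on $G$ in 4).
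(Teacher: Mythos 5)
Your proposal matches the paper's proof on everything routine: closure of $\mathcal O_w$ under hyperaddition via (V4) and under multiplication via (V3) combined with part 3) of Lemma \ref{FVK}; part 2) via Lemma \ref{valpro} and part 2) of Lemma \ref{FVK}; part 3) via (V4), (V2), part 4) of Lemma \ref{FVK} and the standard ``a proper hyperideal contains no unit'' argument; and part 4), which the paper dispatches with ``exactly the same as in the classical case'' and which you verify in more detail (your well-definedness and antisymmetry checks are correct). The genuine gap is precisely the point you flag yourself: you never prove that $x\notin\mathcal O_w$ implies $x^{-1}\in\mathcal O_w$, offering only a plan (``exploit $e\in a*a^{-1}$ together with (\ref{ord}) and Lemma \ref{FVK}''), and the same unproven step is what your totality claim in part 4) rests on. An announced obstacle is not a proof, and here the obstacle is the entire content of part 1).

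What you are missing is that in this setting comparability is not something to be derived from the hypergroup axioms: the order on $H$ is de facto linear, because $w$ is surjective, hyperoperations take nonempty values, and axiom (V4) invokes $\min\lbrace w(x),w(y)\rbrace$, which is meaningless for incomparable values. Granting this, $x\notin\mathcal O_w$ gives $w(x)<e$ outright; the case $(w(x))^{-1}=e$ is impossible (it would force $w(x)=e$ by (H3)); and the paper concludes $w(x^{-1})=(w(x))^{-1}>e$ from Lemma \ref{valpro} together with part 2) of Lemma \ref{FVK}, read symmetrically as $a<e\Rightarrow a^{-1}>e$. Be aware that your suspicion about this step has real substance: with the one-sided $\nearrow$ of Definition \ref{OCH}, the proof of part 2) of Lemma \ref{FVK} does not simply dualize --- from $a\leq e$ and (\ref{ord}) one only gets \emph{some} element of $a*a^{-1}$ lying below $a^{-1}$, not $e\leq a^{-1}$ --- and in fact a purely order-theoretic derivation of the kind you sketch cannot exist: the set $H=\lbrace a,a^{-1},e\rbrace$ with $a*a=a^{-1}*a^{-1}=\lbrace a,a^{-1}\rbrace$, $a*a^{-1}=H$ and the linear order $a<a^{-1}<e$ is a linearly ordered canonical hypergroup in the sense of Definition \ref{OCH} in which $a<e$ and $a^{-1}<e$ simultaneously (the dualization does become automatic under the stronger two-sided reading of $\nearrow$ mentioned in the paper's remark). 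So to close your gap along the paper's lines you must first record the linearity of $\leq$ on $H$ and then invoke the symmetric form of part 2) of Lemma \ref{FVK} as the paper does, rather than hunt for a comparability argument valid for arbitrary partially ordered canonical hypergroups, which the example above rules out.
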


\begin{proof}
1) If $x, y \in \mathcal O_w$, then $w(x), w(y) \geq e$, so by part 3) of Lemma \ref{FVK} we obtain that for all $b \in w(x) * w(y)$ we have $b \geq e$. Since $w(xy) \in w(x) * w(y)$, in particular $w(xy) \geq e$, hence $xy \in \mathcal O_w$.
The inclusion $x+y \subseteq \mathcal O_w$ follows from (V4) and from the fact that $\min \lbrace w(x), w(y) \rbrace \geq e$. Indeed by (V4), 
\[
z\in x+y \Longrightarrow w(z) \geq \min\{w(x),w(y)\} \geq e.
\]
To see that $\mathcal O_w$ is a valuation hyperring, take $x \in F \setminus \mathcal O_w$. Then $w(x) <e$, which implies that $w(x^{-1}) = (w(x))^{-1} >e$ by Lemma \ref{valpro} and part 2) of Lemma \ref{FVK}. Thus, $x^{-1} \in \mathcal O_w$.\\

2) If $x \in U_w$, then $x^{-1} \in U_w$ too. Indeed, $w(x^{-1}) = (w(x))^{-1} = e$. Hence the elements of $U_w$ are units of $\mathcal O_w$. To show the other inclusion, assume that $x \in \mathcal O_w \setminus U_w$. Then $w(x)>e$, so $w(x^{-1})<e$ by part 2) of Lemma \ref{FVK}, which means that $x^{-1} \notin \mathcal O_w$.\\

3) Let $x,y \in \mathfrak{m}_w$. Then by (V4) we obtain that 
\[
z\in x+y \Longrightarrow w(z)\geq\min\{w(x),w(y)\} > e.
\]
Thus, $x+y \subseteq \mathfrak{m}_w$. From $w(-x)=w(x)$ it follows that $-x\in\mathfrak{m}_w$. If $x \in \mathfrak{m}_w$ and $y \in \mathcal O_w$, then $w(x) > e$ and $w(y) \geq e$, so by part 4) of Lemma \ref{FVK} we obtain that for every $b \in w(x)*w(y)$ we have $b>e$. Since by (V3) $w(xy) \in w(x) * w(y)$, we conclude that $xy \in \mathfrak {m}_w$. We have proved that $\mathfrak {m}_w$ is a hyperideal. It follows from part 2) and from the fact that if a hyperideal contains a unit, then is not proper, that $\mathfrak m_w$ is the unique maximal hyperideal of $\mathcal O_w$.\\

4)  By part 2) $U_w$ is a subgroup of the abelian group $(F \setminus \lbrace 0 \rbrace, \cdot, 1)$. So $G$ is the quotient group with respect to this (normal) subgroup. The proof that $\leq$ is a linear order on $G$ is exactly the same as in the classical case.\qedhere
\end{proof}

\begin{df}
Let $(H_1,*_1,e_1)$ and $(H_2,*_2,e_2)$ be canonical hypergroups. 
\begin{enumerate}
\item  A \emph{homomorphism of hypergroups} is a function $f:H_1\to H_2$ such that $f(e_1)=e_2$ and $f(a*_1b)\subseteq f(a)*_2f(b)$.
\item A \emph{strong homomorphism of hypergroups} is a function $f:H_1\to H_2$ such that $f(e_1)=e_2$ and $f(a*_1b)= f(a)*_2f(b)$ as sets.
\item An \emph{isomorphism of hypergroups} is a strong homomorphism of hypergroups which is bijective.
\end{enumerate}
\end{df}

In their paper, Mirdar Harijani and Anvariyeh claim that two hypervaluations $v: F \to H_v\cup\{\infty\}$ and $w: F \to H_w\cup\{\infty\}$ induce the same valuation hyperring if and only if there exists an order preserving isomorphism $f$ between the value hypergroups $H_v, H_w$ such that $w = f \circ v$. Although this statement is true in classical valuation theory, this is no longer true for hypervaluations.

\begin{example}
Consider the hypervaluations $v, w$ from Example \ref{ex hypval}. We observe that the valuation ring $\mathcal O_v$ of $v$ coincides with the valuation (hyper)ring of $w$. Indeed, if $v(x)\geq 0_\Gamma$, then $w(x)\in\{0,1\}$, so $w(x)\succeq 0$ which means that $\mathcal O_v\subseteq\mathcal O_w$. On the other hand, if $w(x)\succeq 0$, then, by definition, $v(x)\geq 0$, so $x\in \mathcal O_w$ implies $x\in\mathcal O_v$. Note that there is no order preserving isomorphism between the ordered abelian group $\Gamma$ and the sign hypergroup. But in the present case, as shown above, they define the same valuation (hyper)ring in $K$.
\end{example}

The next theorem states that any hypervaluation from a hyperfield onto an ordered canonical hypergroup is the composition of a hypervaluation onto an ordered abelian group (which induces the same valuation hyperring) and an order preserving homomorphism of hypergroups.

\begin{tw} \label{main}
Let $F$ be a hyperfield, $(H, *,e, \leq)$ an ordered canonical hypergroup and $w:F \rightarrow H \cup \lbrace \infty \rbrace$ a hypervaluation. Then there exists a hypervaluation $v:F \rightarrow G \cup \lbrace \infty \rbrace$, where $G$ is an ordered abelian group, and an order preserving homomorphism $h:G \rightarrow H$ of hypergroups from $G$ to $H$ s.t. $w = h \circ v$ and $\mathcal O_v = \mathcal O_w$.
\end{tw}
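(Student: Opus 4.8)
The plan is to take for $G$ exactly the ordered abelian group $(F\setminus\{0\})/U_w$ produced in part 4) of Proposition \ref{prop}, regarded as an ordered canonical hypergroup via Remark \ref{grhy} (so that its hyperoperation is $xU_w * yU_w=\{xyU_w\}$), and to let $v\colon F\to G\cup\{\infty\}$ be the canonical projection, i.e.\ $v(x)=xU_w$ for $x\neq 0$ and $v(0)=\infty$. The map $h\colon G\to H$ is then forced: since we want $w=h\circ v$, we must set $h(xU_w)=w(x)$ and $h(\infty)=\infty$. Thus the whole argument reduces to checking that these two canonical maps have the required properties, $v$ being clearly surjective.

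First I would verify that $v$ is a hypervaluation. Axioms (V1) and (V3) are immediate from the definitions, since $v(xy)=xyU_w$ is the unique element of $v(x)*v(y)$; and (V2) follows because $w(-1_F)=w(1_F)=e$ by (V2) for $w$ and Lemma \ref{valpro}, whence $-1_F\in U_w$ and $(-x)U_w=xU_w$. The one axiom needing an argument is (V4). Here I would reduce to the case where $x,y,z$ are all nonzero (the cases involving $0$ being trivial, as $e*a=\{a\}$) and, swapping $x,y$ if necessary, assume $v(y)\leq v(x)$, i.e.\ $w(xy^{-1})\geq e$. Multiplying the relation $z\in x+y$ by $y^{-1}$ and using distributivity (R3) gives $zy^{-1}\in xy^{-1}+1_F$, so (V4) for $w$ yields $w(zy^{-1})\geq\min\{w(xy^{-1}),e\}=e$; by the definition of the ordering on $G$ this says precisely $v(y)\leq v(z)$, which is (V4).

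Next I would check that $h$ is well defined and a homomorphism of hypergroups, and then read off the final identities. Well-definedness and the homomorphism property are short: if $xU_w=yU_w$ then $w(xy^{-1})=e$ and $w(x)\in w(xy^{-1})*w(y)=\{w(y)\}$, so $w(x)=w(y)$; moreover $h(e_G)=h(1_FU_w)=w(1_F)=e$, and $h(xU_w*yU_w)=\{w(xy)\}\subseteq w(x)*w(y)$ by (V3), which is the required inclusion. Finally $w=h\circ v$ holds by construction, and $\mathcal O_v=\mathcal O_w$ is immediate: for $x\neq0$ one has $x\in\mathcal O_v\iff e_G\leq xU_w\iff x\in\mathcal O_w$ by the very definition of the ordering on $G$, and $0$ lies in both.

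I expect the order preserving property of $h$ to be the main obstacle, as it is where the hyper-structured nature of $H$ and the weakness of the relation $\nearrow$ come into play. Assume $xU_w\leq yU_w$, i.e.\ $w(yx^{-1})\geq e$; I must deduce $w(x)\leq w(y)$. Writing $a=w(x)$ and $b=w(y)$, axiom (V3) together with Lemma \ref{valpro} produce an element $c=w(yx^{-1})\in b*a^{-1}$ with $c\geq e$. Using (H2) and then (H4) one rearranges $c\in a^{-1}*b$ into $b\in a*c$, and the ordering axiom (\ref{ord}) applied to $e\leq c$ gives $\{a\}=e*a\nearrow c*a=a*c$, so part 1) of Lemma \ref{FVK} forces $a\leq d$ for every $d\in a*c$, in particular $a\leq b$. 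Keeping careful track of the direction of (H4) and of which side of $\nearrow$ one lands on is the delicate bookkeeping that makes this step the crux of the proof.
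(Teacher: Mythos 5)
Your proposal is correct and follows essentially the same route as the paper: the same quotient group $G=(F\setminus\{0\})/U_w$ from Proposition \ref{prop}, the same canonical projection $v$ and induced map $h$, and the same appeals to Lemma \ref{valpro} and Lemma \ref{FVK} for well-definedness and order preservation. The only deviations are cosmetic and harmless: you verify (V4) by applying (V4) of $w$ to $zy^{-1}\in xy^{-1}+1_F$ instead of using closure of $\mathcal O_w$ under hyperaddition, and you place $w(y)$ in $w(yx^{-1})*w(x)$ via reversibility (H4) rather than by writing $w(y)=w(yx^{-1}x)$.
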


\begin{proof}
By part 4) of Proposition \ref{prop} we can consider the ordered abelian group $G= (F\setminus\{0\}) / U_w$. Let $h : G \rightarrow H $ be the map 
\[
h(xU_{w}) = w(x).
\]
We first show that $h$ is well defined. Suppose that $xU_w = yU_w$. This is equivalent to $xy^{-1} \in U_w$, which holds if and only if $w(xy^{-1}) = e$. From (V3) and Lemma \ref{valpro} we obtain that $e \in w(x) * w(y^{-1}) = w(x) * (w(y))^{-1}$ and this means that $w(x) = w(y)$ by the uniqueness required in axiom (H3). This proves that $h$ is well defined.
To show that $h$ is a homomorphism of hypergroups, note that $h(1U_w) = w(1) = e$ by Lemma \ref{valpro}. Take $xU_w, yU_w \in G$. According to Remark \ref{grhy} we obtain that
\[
h(xU_w \cdot yU_w) = \lbrace h(xyU_w) \rbrace = \lbrace w(xy) \rbrace \subseteq w(x) * w(y) = h(xU_w) * h(yU_w),
\]
which proves that $h$ is a homomorphism of hypergroups. Moreover, assume that $xU_w \leq yU_w$, which by definition is equivalent to $e \leq w(yx^{-1})$. From the latter it follows by condition (\ref{ord}) of Definition \ref{OCH} that $\lbrace w(x) \rbrace = e * w(x) \nearrow w(yx^{-1}) * w(x)$. Hence by part 1) of Lemma \ref{FVK}, $w(x) \leq b$ for every $b \in w(yx^{-1}) * w(x)$. As $w(y) = w(yx^{-1}x) \in w(yx^{-1}) * w(x)$, we obtain that $w(x) \leq w(y)$. This proves that $h$ is order preserving.\par
We now define $v: F \rightarrow G \cup \lbrace \infty \rbrace$ as follows:
\[
v(x) = \left\{ \begin{array}{ll}
x U_w,  & \textnormal{ if $x \neq 0$}\\
\infty, & \textnormal{ if $ x=0 $}
\end{array} \right.
\]
The map $v$ is a hypervaluation onto the ordered abelian group $G$ (again, modulo the provision of Remark \ref{grhy}).
The only axiom which needs justification is the fourth: if $z \in x+y$ for some $x,y \in F$, then we wish to show that $v(z) \geq \min \lbrace v(x), v(y) \rbrace$. Assume, without loss of generality, that $v(x) \leq v(y)$, so that $yx^{-1} \in \mathcal O_w$. From $z \in x+y$ it follows that 
\[
yx^{-1} \in (x-z)x^{-1} = 1 - zx^{-1}.
\]
This means that $zx^{-1} \in yx^{-1} - 1 \subseteq \mathcal O_w,$ which proves that $v(z) \geq v(x)$.

Finally, one can see that $w = h \circ v$. Indeed, if $x \in F$, then if $x \neq 0$, we have that $h(v(x)) = h(xU_w) = w(x)$ by definition. If $x= 0$, the situation is clear once we set $h(\infty):=\infty$. It is left to show that $\mathcal O_v = \mathcal O_w$. We observe that $x \in \mathcal O_w$ if and only if $1U_w \leq xU_w$, which happens if and only if $x \in \mathcal O_v$. This completes the proof.
\end{proof}

\end{document}